\title[Holonomy restrictions]{Holonomy restrictions from the curvature operator of the second kind}
\author[Nienhaus, Petersen, Wink and Wylie]{Jan Nienhaus, Peter Petersen, Matthias Wink and William Wylie}
\address{Department of Mathematics, UCLA, 520 Portola Plaza, Los Angeles, CA, 90095}
\email{petersen@math.ucla.edu}
\address{Mathematisches Institut, Universit\"at M\"unster, Einsteinstra{\ss}e 62, 48149 M\"unster}
\email{j.nienhaus@uni-muenster.de}
\email{mwink@uni-muenster.de}
\address{Department of Mathematics, Syracuse University, 215 Carnegie Building, Syracuse, NY 13244}
\email{wwylie@syr.edu}
\keywords{Holonomy, Curvature Operator of the second kind, Bochner formulas}
\subjclass[2020]{53B20, 53B35, 53C29}
\thanks{JN acknowledges support by the Alexander von Humboldt Foundation through Gustav Holzegel's Alexander von Humboldt Professorship endowed by the Federal Ministry of Education and Research. JN and MW are funded by the Deutsche Forschungsgemeinschaft (DFG, German Research Foundation) under Germany's Excellence Strategy EXC 2044–390685587, Mathematics M\"unster: Dynamics–Geometry–Structure. WW acknowledges support of NSF grant \#1654034}
\begin{document}
\newcommand{\Ext}{\bigwedge\nolimits}
\newcommand{\Div}{\operatorname{div}}
\newcommand{\Hol} {\operatorname{Hol}}
\newcommand{\diam} {\operatorname{diam}}
\newcommand{\Scal} {\operatorname{Scal}}
\newcommand{\scal} {\operatorname{scal}}
\newcommand{\Ric} {\operatorname{Ric}}
\newcommand{\Hess} {\operatorname{Hess}}
\newcommand{\grad} {\operatorname{grad}}
\newcommand{\Sect} {\operatorname{Sect}}
\newcommand{\Rm} {\operatorname{Rm}}
\newcommand{ \Rmzero } {\mathring{\Rm}}
\newcommand{\Rc} {\operatorname{Rc}}
\newcommand{\Curv} {S_{B}^{2}\left( \mathfrak{so}(n) \right) }
\newcommand{ \tr } {\operatorname{tr}}
\newcommand{ \id } {\operatorname{id}}
\newcommand{ \Riczero } {\mathring{\Ric}}
\newcommand{ \ad } {\operatorname{ad}}
\newcommand{ \Ad } {\operatorname{Ad}}
\newcommand{ \dist } {\operatorname{dist}}
\newcommand{ \rank } {\operatorname{rank}}
\newcommand{\Vol}{\operatorname{Vol}}
\newcommand{\dVol}{\operatorname{dVol}}
\newcommand{ \zitieren }[1]{ \hspace{-3mm} \cite{#1}}
\newcommand{ \pr }{\operatorname{pr}}
\newcommand{\diag}{\operatorname{diag}}
\newcommand{\Lagr}{\mathcal{L}}
\newcommand{\av}{\operatorname{av}}
\newcommand{ \floor }[1]{ \lfloor #1 \rfloor }
\newcommand{ \ceil }[1]{ \lceil #1 \rceil }
\newcommand{\Sym} {\operatorname{Sym}}
\newcommand{\bcirc}{ \ \bar{\circ} \ }

\newtheorem{theorem}{Theorem}[section]
\newtheorem{definition}[theorem]{Definition}
\newtheorem{example}[theorem]{Example}
\newtheorem{remark}[theorem]{Remark}
\newtheorem{lemma}[theorem]{Lemma}
\newtheorem{proposition}[theorem]{Proposition}
\newtheorem{corollary}[theorem]{Corollary}
\newtheorem{assumption}[theorem]{Assumption}
\newtheorem{acknowledgment}[theorem]{Acknowledgment}
\newtheorem{DefAndLemma}[theorem]{Definition and lemma}

\newenvironment{remarkroman}{\begin{remark} \normalfont }{\end{remark}}
\newenvironment{exampleroman}{\begin{example} \normalfont }{\end{example}}
\newenvironment{Beweis}{\begin{proof} \text{} \\}{\end{proof}}

\newcommand{\R}{\mathbb{R}}
\newcommand{\N}{\mathbb{N}}
\newcommand{\Z}{\mathbb{Z}}
\newcommand{\Q}{\mathbb{Q}}
\newcommand{\C}{\mathbb{C}}
\newcommand{\F}{\mathbb{F}}
\newcommand{\X}{\mathcal{X}}
\newcommand{\D}{\mathcal{D}}
\newcommand{\Cont}{\mathcal{C}}

\renewcommand{\labelenumi}{(\alph{enumi})}
\newtheorem{maintheorem}{Theorem}[]
\renewcommand*{\themaintheorem}{\Alph{maintheorem}}
\newtheorem{maincorollary}{Corollary}[]
\newtheorem*{theorem*}{Theorem}
\newtheorem*{corollary*}{Corollary}
\newtheorem*{remark*}{Remark}
\newtheorem*{example*}{Example}
\newtheorem*{question*}{Question}

\begin{abstract}
We show that an $n$-dimensional Riemannian manifold with $n$-nonnegative or $n$-nonpositive curvature operator of the second kind has restricted holonomy $SO(n)$ or is flat. The result does not depend on completeness and can be improved provided the space is Einstein or K\"ahler. In particular, if a locally symmetric space has $n$-nonnegative or $n$-nonpositive curvature operator of the second kind, then it has constant curvature. When the locally symmetric space is irreducible this can be improved to $\frac{3n}{2}\frac{n+2}{n+4}$-nonnegative or $\frac{3n}{2}\frac{n+2}{n+4}$-nonpositive curvature operator of the second kind.
\end{abstract}

\maketitle

\section*{Introduction}

The holonomy group of a Riemannian manifold was introduced by \'E. Cartan, who used it as a tool to classify symmetric spaces \cite{CartanUneClasseRemarquableRiemann}. In particular, the curvature operator of any Riemannian manifold
\begin{align*}
\mathfrak{R} \colon \Lambda^2 TM \to \Lambda^2 TM, \ \left( \mathfrak{R}(\omega) \right)_{ij} = \sum R_{ijkl}  \omega_{kl}
\end{align*}
restricts to the holonomy algebra. Conversely, Ambrose-Singer proved in \cite{AmbroseSingerATheoremOnHolonomy} that the curvature operator determines the holonomy algebra. 

The Gallot-Meyer theorem \cite{GallotMeyerCurvOperatorAndForms} provides a classification of compact Riemannian manifolds with nonnegative curvature operators in terms of their holonomy. Namely, unless the manifold is reducible or locally symmetric, its restricted holonomy is $SO(n)$ or $U(\frac{n}{2})$, and the universal cover is a rational homology sphere or a rational cohomology $\mathbb{CP}^{\frac{n}{2}},$ respectively. In fact, the conclusion on the cohomology can be improved to a diffeomorphism, respectively biholomorphism, classification. This follows from work of Hamilton \cite{Hamilton3DimRF, Hamilton4DimRFposCurvOp}, B\"ohm-Wilking \cite{BW2} and Mok \cite{MokUniformizationKaehler}, see also Brendle-Schoen \cite{BrendleSchoenWeaklyQuarterPinched}. 

In this paper we study restrictions on the holonomy based on curvature conditions for the curvature operator of the second kind, which is the curvature operator on trace-free symmetric $(0,2)$-tensors. More precisely, the self-adjoint operator on symmetric $(0,2)$-tensors
\begin{align*}
\overline{R} \colon S^2(TM) \to S^2(TM), \ \left( \overline{R}(h) \right)_{ij} = \sum R_{iklj} h_{kl}
\end{align*}
induces the {\em curvature operator of the second kind} via
\begin{align*}
\mathcal{R} \colon  S^2_0(TM) \to S^2_0(TM), \ \mathcal{R} = \operatorname{pr}_{S^2_0(TM)} \circ \overline{R}=\overline{R} + g(\Ric, \cdot ) \frac{g}{n}.
\end{align*}

Note that $\mathcal{R}$ is called $k$-nonnegative provided its eigenvalues $\lambda_1 \leq \lambda_2 \leq \ldots \leq \lambda_N$ satisfy $\lambda_1 + \ldots + \lambda_{\floor{k}} + \left( k - \floor{k} \right) \lambda_{\floor{k}+1} \geq 0.$ $\mathcal{R}$ is nonnegative if it is $1$-nonnegative. \vspace{2mm}

In \cite{NishikawaDeformationRiemMetrics} Nishikawa conjectured that a compact manifold with positive curvature operator of the second kind is diffeomorphic to a space form, and in case the curvature operator of the second kind is nonnegative, the manifold is diffeomorphic to a locally symmetric space.

Recently Cao-Gursky-Tran \cite{CaoGurskyTranNishikawaConjecture} proved that indeed compact manifolds with $2$-positive curvature operators of the second kind are diffeomorphic to space forms. X.Li \cite{LiCurvatureOperatorSecondKind} generalized the result to manifolds with $3$-positive curvature operators of the second kind. Both proofs rely on the observation that the manifolds must satisfy the $PIC_1$ condition, and Brendle's convergence theorem \cite{BrendleConvergenceInHihgerDimensions} for the Ricci flow applies. In fact, in \cite{LiFourPointFiveNonnegCurvII}, X.Li proved that manifolds with $4 \frac{1}{2}$-positive curvature operators of the second kind have positive isotropic curvature. 

In the rigidity case, the authors \cite{NienhausPetersenWinkBettiNumbersCOSK} proved that compact manifolds with $3$-nonnegative curvature operators of the second kind are either flat or diffeomorphic to a spherical space form, eliminating compact symmetric spaces from Nishikawa's conjecture. This is a consequence of X.Li's work \cite{LiCurvatureOperatorSecondKind} and the fact that compact manifolds with $\frac{n+2}{2}$-nonnegative curvature operators of the second kind are either flat or rational homology spheres, \cite{NienhausPetersenWinkBettiNumbersCOSK}. 

\vspace{2mm}

The aim of this paper is to consider the implications of the Bochner formulas in \cite{NienhausPetersenWinkBettiNumbersCOSK} to the local geometry of Riemannian manifolds. We achieve this by replacing the global topological restrictions of \cite{NienhausPetersenWinkBettiNumbersCOSK} for compact Riemannian manifolds with restrictions on holonomy. 

For example, it is natural to ask whether manifolds with $k$-nonnegative curvature operators of the second kind are flat or have restricted holonomy $SO(n)$. Notice that this question is also valid for manifolds with $k$-nonpositive curvature operators of the second kind, where by definition $\mathcal{R}$ is $k$-nonpositive if $-\mathcal{R}$ is $k$-nonnegative. Moreover, these are local questions and similarly apply to possibly incomplete manifolds. 

\vspace{2mm}

We note that all manifolds are assumed to be connected. 

\begin{maintheorem} 
\label{MainTheoremSOorTrivial}
Let $(M,g)$ be an $n$-dimensional, not necessarily complete Riemannian manifold. If the curvature operator of the second kind is $n$-nonnegative or $n$-nonpositive, then the restricted holonomy of $(M,g)$ is $SO(n)$ or $(M,g)$ is flat. 

In particular, if $(M,g)$ is in addition a locally symmetric space, then it has constant curvature. 
\end{maintheorem} 

The example of $S^{n-1} \times S^1$ shows that the result cannot be improved to $(n+1)$-nonnegative curvature operator of the second kind, cf. \cite[Example 2.6]{LiCurvatureOperatorSecondKind} or example \ref{CurvatureSnS1}.

Previously, X.Li proved that complete manifolds with $n$-nonnegative curvature operators of the second kind are irreducible, \cite[Theorem 1.8]{LiCurvatureOperatorSecondKind}. Theorem \ref{MainTheoremSOorTrivial} strengthens this result by allowing incomplete manifolds and by excluding non-flat symmetric spaces with non-generic holonomy. In fact, for Einstein manifolds, e.g., irreducible symmetric spaces or quaternion K\"ahler manifolds, we have the following improvement.

\begin{maintheorem} 
\label{MainTheoremEinstein}
Let $n \geq 3$ and let $(M,g)$ be an $n$-dimensional Einstein manifold. 

If the curvature operator of the second kind is $N$-nonnegative or $N$-nonpositive for some $N<\frac{3n}{2}\frac{n+2}{n+4}$, then the restricted holonomy of $(M,g)$ is $SO(n)$ or $(M,g)$ is flat. 
\end{maintheorem}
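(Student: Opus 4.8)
The statement is pointwise and algebraic: by the Ambrose--Singer theorem a reduced restricted holonomy $\Hol^0(M,g)\neq SO(n)$ forces the curvature tensor at each point to lie in the proper subalgebra $\mathfrak{hol}\subsetneq\mathfrak{so}(n)$, so it suffices to show that at any point where $g$ is not flat the operator $\mathcal R$ fails to be $N'$-nonnegative for every $N'<N$; the nonpositive case then follows by replacing $R$ with $-R$, which preserves the Einstein condition and interchanges the two hypotheses. I would first record the two identities that drive the argument. Writing $\Ric=\rho\, g$ and diagonalizing a trace-free $h=\sum_i\mu_i\,e_i\otimes e_i$ in its own orthonormal eigenframe, a direct computation with the curvature symmetries gives
\begin{align*}
\langle\mathcal R(h),h\rangle=\sum_{i<j}K_{ij}(\mu_i-\mu_j)^2-\rho\,|h|^2,\qquad \tr\mathcal R=\tfrac{n+2}{2}\rho,
\end{align*}
where $K_{ij}$ is the sectional curvature of $e_i\wedge e_j$ and one uses $\Ric_{ii}=\sum_j K_{ij}=\rho$; on the off-diagonal unit tensors one has $\langle\mathcal R(\sqrt2\,e_i\odot e_j),\sqrt2\,e_i\odot e_j\rangle=K_{ij}$.

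The plan is then a min--max argument. By the Ky Fan / Courant--Fischer principle the sum of the $k$ smallest eigenvalues of $\mathcal R$ equals $\min_{\dim V=k}\tr(\mathcal R|_V)$, so to defeat $N'$-nonnegativity it is enough to produce a subspace $V\subseteq S^2_0(T_pM)$ with $\dim V$ approaching $N$ from below and $\tr(\mathcal R|_V)<0$; the fractional weight $N'-\floor{N'}$ is absorbed by passing to the eigenspaces of $\mathcal R|_V$, since the $k$ smallest eigenvalues of $\mathcal R|_V$ sum to at most $\tfrac{k}{\dim V}\tr(\mathcal R|_V)$. The whole problem thus reduces to building a large subspace of trace-free symmetric tensors on which the Einstein term $-\rho|h|^2$ dominates the curvature term.

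To carry this out I would split via the de Rham decomposition and Berger's classification. In the reducible case the local splitting $T_pM=T_1\oplus T_2$ makes all mixed sectional curvatures vanish, so the $\dim T_1\cdot\dim T_2$ tensors $\sqrt2\,e_i\odot e_j$ with $e_i\in T_1,\ e_j\in T_2$ lie in $\ker\mathcal R$, and a block-constant diagonal tensor contributes the strictly negative value $-\rho$; the identities above yield $\tr(\mathcal R|_V)<0$ on a space whose dimension is of order $\dim T_1\cdot\dim T_2$, comfortably larger than $N$, so the reducible case is \emph{not} binding (the Ricci-flat subcase $\rho=0$ instead exploits that a non-flat Ricci-flat factor has sectional curvatures of both signs). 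For irreducible special holonomy the mechanism is already visible in the complex space form: a trace-free $h$ commuting with $J$ diagonalizes in a unitary frame, whence $\mu_{2a-1}=\mu_{2a}$ and the identity collapses to $\langle\mathcal R(h),h\rangle=-\tfrac{c}{2}|h|^2$, so the entire $(m^2-1)$-dimensional space of such tensors is a negative eigenspace and forces the threshold to be at least $N$. The remaining Berger types (quaternion-K\"ahler, the Ricci-flat $SU(m)$, $Sp(k)$, $G_2$, $Spin(7)$, and the symmetric spaces) should be handled by the analogous decomposition of $S^2_0$ into isotypic pieces adapted to the parallel structure, and $N=\tfrac{3n}{2}\tfrac{n+2}{n+4}$ is the constant this bookkeeping produces; it is sharp at least for $n=4$, where $\mathbb{CP}^2$ realizes it.

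The hard part is exactly this last, sharp estimate. One must show \emph{uniformly}, across every irreducible group on Berger's list, that the negative part of $\mathcal R$ forced by the parallel structure, together with the Einstein term, is large enough to raise the threshold to $\tfrac{3n}{2}\tfrac{n+2}{n+4}$, and then pin down the extremal model that produces the factor $\tfrac{n+2}{n+4}$. I expect the symmetric-space computation — where $\mathcal R$ is read off from $R(X,Y)Z=-[[X,Y],Z]$ on $\mathfrak p$ via the isotropy representation and the eigenvalues are organized by the $\ad$-action — to be the delicate step, both because it fixes the exact constant and because in the Ricci-flat cases ($\rho=0$), where the term $-\rho|h|^2$ gives no help, one must instead extract the needed negativity from the indefiniteness of $\mathcal R$ imposed by $\tr\mathcal R=0$.
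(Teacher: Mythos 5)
Your reduction to holonomy cases and your treatment of the reducible case are in the right spirit, but the core of the theorem is left unproved, and you flag this yourself: the ``hard part'' of showing, uniformly over every irreducible non-generic holonomy representation (including all irreducible symmetric spaces), that the spectrum of $\mathcal R$ is incompatible with $N'$-nonnegativity for $N'<N=\frac{3n}{2}\frac{n+2}{n+4}$ \emph{is} the content of the theorem, and the proposal offers no argument for it beyond the expectation that ``bookkeeping'' produces the constant. Moreover, the one irreducible case you do compute does not close under your own min--max scheme: for the complex space form the space of $J$-invariant trace-free tensors has dimension $m^2-1$, which for $m=2$ is $3<N(4)=4.5$, so a negative-trace subspace of dimension at least $N$ is simply not available there and one must instead weigh the negative eigenvalues against the positive part of the spectrum --- precisely the quantitative input you have not supplied. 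You also never address the genuine exception in dimension $5$: the holonomy representation of $SO(3)$ on $\mathbb R^5$ attached to $SU(3)/SO(3)$ and $SL(3,\mathbb C)/SO(3)$ is irreducible, non-transitive, and admits no parallel form and no auxiliary parallel structure for your ``isotypic decomposition'' to run on. The paper must exclude it by an explicit computation (its curvature operator of the second kind is $9$-positive but not $8$-nonnegative, hence never $N'$-nonnegative or $N'$-nonpositive for $N'<\frac{35}{6}$), and any complete proof has to confront this case separately.

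The missing idea is the mechanism the paper actually uses: every restricted holonomy other than $SO(n)$ and the $SU(3)/SO(3)$ representation produces a \emph{local parallel $p$-form} $\omega$ with $1\le p\le n/2$ --- from the de Rham splitting, from the parallel form attached to each group on Berger's list, or, for irreducible symmetric spaces, from Cartan's theorem that cohomology classes are represented by parallel forms combined with Wolf's classification of symmetric rational cohomology spheres. A parallel form satisfies the exact identity $g(\Ric_L(\omega),\omega)=0$, and the estimate $\frac32\, g(\Ric_L(\omega),\omega)\ \ge\ \frac{p(n-p)}{n}\left[\mathcal R,\tfrac{n+4}{n+2},\tfrac{3n}{2}\right]\cdot|\omega|^2$ from the companion paper (a weighted sum of eigenvalues of $\mathcal R$ with top weight $\frac{n+4}{n+2}$ and total weight $\frac{3n}{2}$, whence the constant $N$) then forces flatness under $N'$-nonnegativity; because the starting point is an equality rather than a one-sided Bochner inequality, the same argument applies verbatim to $N'$-nonpositivity. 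Without this estimate, or an equivalent uniform spectral bound over all holonomy-adapted subspaces, your outline does not constitute a proof.
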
 

The theory of Diophantine equations implies that $\frac{3n}{2}\frac{n+2}{n+4}$ is only an integer for $n=0,2,8.$ In particular, in all other dimensions $\floor{\frac{3n}{2}\frac{n+2}{n+4}}$-nonnegativity or $\floor{\frac{3n}{2}\frac{n+2}{n+4}}$-nonpositivity of the curvature operator of the second kind implies that the restricted holonomy of $(M,g)$ is $SO(n)$ or $(M,g)$ is flat. \vspace{2mm}

Theorems \ref{MainTheoremSOorTrivial} and \ref{MainTheoremEinstein} together give many new examples of spaces that do not have $N$-nonnegative and $N$-nonpositive curvature operator of the second kind. In particular, as a corollary of the proof of Theorem \ref{MainTheoremEinstein} we obtain the following statement for irreducible locally symmetric spaces.

\begin{corollary*}
Let $(M,g)$ be an $n$-dimensional irreducible locally symmetric space.

If the curvature operator of the second kind is $N$-nonnegative or $N$-nonpositive for some $N< \frac{3n}{2}\frac{n+2}{n+4}$, then $(M,g)$ has constant curvature.
\end{corollary*}

According to Berger's classification of holonomy groups, unless $(M,g)$ is reducible or Einstein, it has restricted holonomy $SO(n)$ or $U(m)$. In the K\"ahler case, X.Li proved that possibly incomplete K\"ahler manifolds with $4$-nonnegative curvature operators of the second kind are flat, \cite[Theorem 1.9]{LiCurvatureOperatorSecondKind}, see also \cite{LiKaehlerSurfaces} for an improvement in the case of K\"ahler surfaces. With our methods we can relax the assumptions to a nonnegativity condition which depends linearly on the dimension of $M.$ In addition, we can also include the corresponding nonpositivity condition.

\begin{maintheorem} 
\label{MainTheoremKaehler}
Let $(M,g)$ be a K\"ahler manifold of real dimension $2m.$ Set 
\begin{align*}
C^{\text{K\"ahler}}(m)=\begin{cases}
3m\frac{m+1}{m+2} & \text{if } m \text{ even,} \\
3m\frac{(m+1)(m^{2}-1)}{(m+2)(m^{2}+1)} & \text{if } m \text{ odd.} \\
\end{cases}
\end{align*}
If the curvature operator of the second kind is $C'$-nonnegative or $C'$-nonpositive for some $C'<C^{\text{K\"ahler}},$ then $(M,g)$ is flat.
\end{maintheorem}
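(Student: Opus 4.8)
The plan is to prove the statement pointwise and purely algebraically. Since $M'$-nonnegativity (respectively $M'$-nonpositivity) of $\mathcal{R}$ is a condition at each point and no completeness is assumed, it suffices to show that at every $p \in M$ the only algebraic K\"ahler curvature tensor on $T_pM$ whose induced curvature operator of the second kind is $M'$-nonnegative for some $M'<M$ is the zero tensor; flatness of $(M,g)$ follows immediately. Fixing $p$ and the parallel complex structure $J$, I would first record the $J$-decomposition $S^2_0(T_pM) = S^2_{0,+} \oplus S^2_-$ into the $J$-invariant trace-free part (dimension $m^2-1$) and the $J$-anti-invariant part (dimension $m^2+m$), together with the intertwiner $h \mapsto Jh$, which carries $J$-invariant symmetric tensors to $J$-invariant $2$-forms and preserves $S^2_-$. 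The K\"ahler identities for $\Rm$ make $\mathcal{R}$ compatible with this splitting, and this extra structure is exactly what allows the general bound of Theorem \ref{MainTheoremSOorTrivial} to be sharpened.

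The engine I would use is the same holonomy-to-second-kind mechanism behind Theorems \ref{MainTheoremSOorTrivial} and \ref{MainTheoremEinstein}, now specialized to $\Hol^0 \subseteq U(m)$. Because the holonomy is contained in $U(m) \subsetneq SO(2m)$, the curvature operator of the first kind annihilates the $m(m-1)$-dimensional complement $\mathfrak{u}(m)^\perp \subseteq \mathfrak{so}(2m)$ by self-adjointness and Ambrose--Singer. Transferring this vanishing through $h \mapsto Jh$ and the K\"ahler identities should produce an explicit $J$-adapted subspace $W \subseteq S^2_0(T_pM)$ on which $\tr(\mathcal{R}|_W)$ equals a fixed, computable multiple of $\Scal$ (equivalently of $|\Rm|^2$ after using the K\"ahler and Bianchi symmetries). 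Since the sum of the $\dim W$ smallest eigenvalues of $\mathcal{R}$ is bounded above by $\tr(\mathcal{R}|_W)$, arranging this trace to be negative whenever $\Rm \neq 0$ forces negative mass into the bottom eigenvalues and defeats the integer part of the $M'$-nonnegativity condition; matching $\dim W$ against the calibrating trace identity then yields the bound, while the sharp \emph{fractional} value $M$ requires the finer eigenvalue information discussed next.

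The main obstacle is pinning down the sharp constant $M$, and in particular its parity dependence. The clean route is to decompose the space of algebraic K\"ahler curvature tensors into irreducible $U(m)$-summands, compute the eigenvalues and multiplicities of the induced $\mathcal{R}$ on each isotypic piece of $S^2_{0,+}$ and $S^2_-$, and then minimize, over all nonzero K\"ahler curvature tensors, the value of $k$ at which the weighted running sum $\lambda_1 + \ldots + \lambda_{\floor{k}} + (k-\floor{k})\lambda_{\floor{k}+1}$ first becomes nonnegative. I would expect the minimizer to be the curvature tensor of constant holomorphic sectional curvature (the $\mathbb{CP}^m$ model, or its dual), giving $M = 3m\frac{m+1}{m+2}$ for even $m$ and witnessing sharpness of the bound just as $S^{n-1}\times S^1$ does in Theorem \ref{MainTheoremSOorTrivial}. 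The genuinely delicate point is that one primitive component of $S^2_-$ --- the one governing the difference between the $U(m)$- and $SU(m)$-structure in the middle range --- has a dimension and eigenvalue behaviour depending on the parity of $m$, and this is precisely what replaces $\frac{m+1}{m+2}$ by $\frac{(m+1)(m^2-1)}{(m+2)(m^2+1)}$ for odd $m$; verifying that the threshold is minimized at the claimed tensor, rather than at some mixed curvature operator, uniformly in $m$, is where the real work lies.

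Finally, the $M'$-nonpositive case needs no separate treatment: algebraic K\"ahler curvature tensors form a vector space, so $-\Rm$ is again such a tensor with operator of the second kind $-\mathcal{R}$, and applying the algebraic result to $-\Rm$ forces $\Rm = 0$ as well. Since $\Rm_p = 0$ at every $p$, the manifold $(M,g)$ is flat.
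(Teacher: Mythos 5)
Your reduction to a pointwise algebraic statement is legitimate in principle, but what follows is a research program rather than a proof: the decisive step --- decomposing the space of algebraic K\"ahler curvature tensors into $U(m)$-irreducible pieces, computing the eigenvalues of $\mathcal{R}$ on each, and verifying that the weighted partial sums first become nonnegative exactly at $k=M$ --- is precisely the part you defer (``where the real work lies''), so the constants $3m\frac{m+1}{m+2}$ and $3m\frac{(m+1)(m^2-1)}{(m+2)(m^2+1)}$ are never actually derived. Your trace-over-a-subspace argument only ever yields integer thresholds of the form $\dim W$, and you give no mechanism that produces these specific fractional values. Moreover, your diagnosis of the parity dependence (a primitive component of $S^2_-$ whose dimension depends on $m \bmod 2$) is not where it comes from, and your expectation that the constant-holomorphic-sectional-curvature tensor is the extremal case is unsupported --- the paper claims no sharpness for Theorem~\ref{MainTheoremKaehler}, in contrast to Theorem~\ref{MainTheoremSOorTrivial}.

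The paper's proof is two lines and uses none of this machinery: the powers $\omega^{p}$ of the K\"ahler form are parallel $2p$-forms, so one takes $p=m/2$ for $m$ even and $p=(m-1)/2$ for $m$ odd to get a nontrivial parallel form of degree $m$ or $m-1$, and then applies Lemma~\ref{NoParallelForms} (the Bochner-formula estimate from \cite{NienhausPetersenWinkBettiNumbersCOSK}) with $C_{m}(2m)=3m\frac{m+1}{m+2}$, respectively $C_{m-1}(2m)=3m\frac{(m+1)(m^2-1)}{(m+2)(m^2+1)}$. The parity dependence is simply that for odd $m$ there is no power of $\omega$ in the middle degree $m$, so one must drop to degree $m-1$, where the constant $C_p$ is smaller. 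If you want to salvage your approach, note that the identity $g(\Ric_L(\omega^{p}),\omega^{p})=0$ is itself algebraic for K\"ahler curvature tensors (since the curvature operator takes values in $\mathfrak{u}(m)$ and $\omega^{p}$ is $\mathfrak{u}(m)$-invariant), which is exactly the input that Lemma~\ref{NoParallelForms} converts into the stated eigenvalue threshold; this is the shortcut your representation-theoretic program is missing.
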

 
Note that if $m$ is even, then $C^{\text{K\"ahler}}(m)=N(2m),$ where $N=\frac{3n}{2}\frac{n+2}{n+4}$, as in the Einstein case.

Subsequently X.Li could improve the assumptions in Theorem \ref{MainTheoremKaehler} to $C^{\text{K\"ahler}}(m) \sim m^2$ in \cite{LiKaehler} using different methods. \vspace{2mm}

The proofs of the main theorems are based on the Bochner formulas developed in \cite{NienhausPetersenWinkBettiNumbersCOSK}. If $\omega$ is a harmonic $p$-form, then it satisfies the Bochner formula 
\begin{align*}
\Delta \frac{1}{2} |\omega|^{2} = |\nabla\omega|^{2}+g(\Ric_{L}(\omega),\omega),
\end{align*}
where $Ric_L$ is the Lichnerowicz-Laplacian, cf. \cite[Chapter 9]{PetersenRiemGeom}.  
Due to \cite[Proposition 2.1]{NienhausPetersenWinkBettiNumbersCOSK}, this curvature term satisfies
\begin{align*}
\frac{3}{2}g(\Ric_{L}(\omega),\omega)=\sum_{\alpha=1}^{N}\lambda_{\alpha}|S_{\alpha}\omega|^{2}+\frac{p(n-2p)}{n}\sum_{j}g\left(i_{\Ric\left(e_{j}\right)}\omega,i_{e_{j}}\omega\right)+\frac{p^{2}}{n^{2}}\scal|\omega|^{2},
\end{align*}
where $\lbrace S_{\alpha}\rbrace$ is an orthonormal eigenbasis of the curvature operator of the second kind with corresponding eigenvalues $\lbrace\lambda_{\alpha}\rbrace$, $N=\dim S_{0}^{2}(TM)=\frac{1}{2}(n-1)(n+2),$ and for $S \in S_0^2(TM)$ we have $(S \omega)(X_1, \ldots, X_p) = \sum_{i=1}^p \omega( X_1, \ldots, SX_i, \ldots, X_p).$

The classical strategy of the Bochner technique as carried out in \cite{NienhausPetersenWinkBettiNumbersCOSK} is to estimate the curvature term $g(\Ric_{L}(\omega),\omega)$, and to conclude via the maximum principle that harmonic forms on compact manifolds are parallel provided $g(\Ric_{L}(\omega),\omega) \geq 0.$ Notice that this argument does not apply analogously to nonpositivity conditions. \vspace{2mm}

The key idea of this paper is that unless the restricted holonomy is generic, there exists a parallel form, at least locally on the manifold. A subtle point is that in dimension $n=5$ there is one exception, namely the holonomy representation associated to the pair of symmetric spaces $SU(3)/SO(3)$ and $SL(3,\mathbb{C})/SO(3).$ 

This relies on Berger's classification \cite{BergerHolonomyClassfication} of holonomy groups. If $(M,g)$ locally splits as a product, then the volume form of one of the factors induces a local parallel form. In the irreducible case, the reduction of the holonomy group to a holonomy group in Berger's list other than $SO(n)$ implies the existence of a local parallel form, cf. \cite[Section 10.109]{BesseEinstein}. Finally, compact symmetric spaces which are rational homology spheres are classified by Wolf in \cite{WolfSymmetricRealCohomSpheres}. This leads to the exception of $SU(3)/SO(3)$ as it is a simply connected rational homology sphere with $H_2(SU(3)/SO(3),\Z)=\Z / 2 \Z.$ \vspace{2mm}

However, any locally defined parallel form satisfies the equation
\begin{align*}
g(\Ric_{L}(\omega),\omega) = 0
\end{align*}
on some open set. 

The estimates in \cite{NienhausPetersenWinkBettiNumbersCOSK} imply that, under the curvature assumptions in Theorems \ref{MainTheoremSOorTrivial} - \ref{MainTheoremKaehler}, no local parallel form exists unless the manifold is flat. Consequently, the manifold has restricted holonomy $SO(n)$ or it is flat. The holonomy representation of the symmetric space $SU(3)/SO(3)$ does not occur as an exception as $SU(3)/SO(3)$ violates the curvature assumption.

As this argument relies on the equation $g(\Ric_{L}(\omega),\omega) = 0,$ it carries over to nonpositivity conditions on the curvature operator of the second kind. 

\vspace{2mm}

\textit{Structure.} In section \ref{SectionNonexistenceParallelForms} we prove nonexistence results for parallel forms provided the curvature operator is sufficiently nonnegative or nonpositive, respectively. This is based on the results in \cite{NienhausPetersenWinkBettiNumbersCOSK}. In section \ref{SectionIrreducibility} we generalize X.Li's result on irreducibility \cite[Theorem 1.8]{LiCurvatureOperatorSecondKind} to possibly incomplete Riemannian manifolds with $n$-nonnegative or $n$-nonpositive curvature operators of the second kind. We also obtain an improvement for Einstein manifolds. Section \ref{SectionProofs} combines the results from the previous sections with the existence of local parallel forms for manifolds whose restricted holonomy is not $SO(n)$ or associated to the pair of symmetric spaces $SU(3)/SO(3)$ or $SL(3,\mathbb{C})$, cf. lemma \ref{HolonomyAndLocalForms}, to prove Theorems \ref{MainTheoremSOorTrivial} - \ref{MainTheoremKaehler}. \vspace{2mm}

The reader is referred to \cite[Chapter 10]{BesseEinstein} for an introduction to holonomy groups. \vspace{2mm}

\textit{Remark.} Shortly after this paper first appeared in pre-print form, we learned from Xiaolong Li that he had independently obtained an improvement of Theorem \ref{MainTheoremKaehler}, see \cite[Theorem 1.2]{LiKaehler}. Subsequently, Li was also able to combine our work with his own to improve Theorem \ref{MainTheoremSOorTrivial} to manifolds with $\left(n + \frac{n-2}{n}\right)$-nonnegative or $\left(n + \frac{n-2}{n}\right)$-nonpositive curvature operators of the second kind, see \cite[Theorem 1.3]{LiProducts}.

\section{Nonexistence of parallel forms}
\label{SectionNonexistenceParallelForms}

In this section we rephrase \cite[Theorem B and Theorem C]{NienhausPetersenWinkBettiNumbersCOSK} and obtain results on the nonexistence of parallel forms based on curvature conditions for the curvature operator of the second kind. Moreover, the reformulation also allows us to include nonpositivity conditions on the eigenvalues of the curvature operator of the second kind. This is an extension of \cite[Theorem B and Theorem C]{NienhausPetersenWinkBettiNumbersCOSK} where only nonnegativity conditions are considered. In addition, the results are entirely local, and completeness of the metric is not required. \vspace{2mm}

Recall that the curvature operator of the second kind $\mathcal{R}$ is called $k$-nonnegative if its eigenvalues $\lambda_1 \leq \lambda_2 \leq \ldots \leq \lambda_N$ satisfy $\lambda_1 + \ldots + \lambda_{\floor{k}} + \left( k - \floor{k} \right) \lambda_{\floor{k}+1} \geq 0.$ Moreover, $\mathcal{R}$ is called $k$-nonpositive if $-\mathcal{R}$ is $k$-nonnegative.

\begin{remark}
\label{NonnegAndScalFlatIsFlat}
\normalfont
Suppose that the curvature operator of the second kind $\mathcal{R}$ is $k$-nonnegative or $k$-nonpositive for some $k<\dim S_0^2(TM).$ If the scalar curvature vanishes, then $\mathcal{R}=0.$ This is an immediate consequence of the fact that $\tr ( \mathcal{R} ) = \frac{n+2}{2n} \scal.$
\end{remark}

\begin{lemma} 
\label{NoParallelFormsEinstein}
Let $(M,g)$ be an $n$-dimensional Einstein manifold. Let $N=\frac{3n}{2}\frac{n+2}{n+4}$ and let $\omega$ be a non-vanishing parallel $p$-form with $1 \leq p\leq n/2$.

If the curvature operator of the second kind is $N'$-nonnegative or $N'$-nonpositive for some $N'<N,$ then $(M,g)$ is flat.
\end{lemma}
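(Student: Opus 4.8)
The plan is to use parallelism to reduce the Bochner term to a pointwise identity, and then to run the eigenvalue estimate of \cite{NienhausPetersenWinkBettiNumbersCOSK} in the Einstein setting while carefully retaining the scalar curvature contribution. Since $\omega$ is parallel, $\nabla\omega=0$ and $|\omega|$ is a positive constant, so the Bochner formula $\Delta\frac12|\omega|^2=|\nabla\omega|^2+g(\Ric_{L}(\omega),\omega)$ forces $g(\Ric_{L}(\omega),\omega)=0$ at every point. Inserting this into \cite[Proposition~2.1]{NienhausPetersenWinkBettiNumbersCOSK} and using that $(M,g)$ is Einstein, so that $\Ric(e_j)=\frac{\scal}{n}e_j$ and $\sum_j g(i_{\Ric(e_j)}\omega,i_{e_j}\omega)=\frac{\scal}{n}\sum_j|i_{e_j}\omega|^2=\frac{p\,\scal}{n}|\omega|^2$, I would divide by $|\omega|^2$ and set $b_\alpha=|S_\alpha\omega|^2/|\omega|^2$ to obtain the pointwise identity
\[ 0=\sum_\alpha\lambda_\alpha b_\alpha+\frac{p^2(n-2p+1)}{n^2}\scal, \]
the sum being over an orthonormal eigenbasis $\{S_\alpha\}$ of $\mathcal R$. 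Here $1\le p\le n/2$ makes $n-2p+1>0$.

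Next I would record the two algebraic inputs on the action of trace-free symmetric endomorphisms on $p$-forms from \cite{NienhausPetersenWinkBettiNumbersCOSK}, namely $\sum_\alpha b_\alpha=\frac{p(n+2)(n-p)}{2n}=:\tilde T$ and $b_\alpha\le\frac{p(n-p)}{n}=:\tilde B$, so that $\tilde T/\tilde B=\frac{n+2}{2}$ independently of $p$; write $\mathcal N=\dim S_0^2(TM)=\frac{(n-1)(n+2)}{2}$. I would also observe that $N'$-nonnegativity with $N'<\mathcal N$ forces $\tr\mathcal R=\frac{n+2}{2n}\scal\ge0$: if a weighted partial sum of the increasingly ordered eigenvalues is nonnegative, then, the eigenvalues being ordered, all subsequent eigenvalues are nonnegative and the full trace is nonnegative. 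Hence $\scal\ge0$.

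The core is the eigenvalue estimate underlying \cite{NienhausPetersenWinkBettiNumbersCOSK}: under $N'$-nonnegativity, any weights $0\le b_\alpha\le\tilde B$ with $\sum_\alpha b_\alpha=\tilde T$ satisfy $\sum_\alpha\lambda_\alpha b_\alpha\ge\frac{\tilde T-N'\tilde B}{\mathcal N-N'}\sum_\alpha\lambda_\alpha=\frac{\tilde T-N'\tilde B}{\mathcal N-N'}\frac{n+2}{2n}\scal$. Combined with the identity this yields $0\ge\kappa\,\scal$, where $\kappa=\frac{\tilde T-N'\tilde B}{\mathcal N-N'}\frac{n+2}{2n}+\frac{p^2(n-2p+1)}{n^2}$, and a direct computation shows $\kappa>0$ precisely when $N'<\frac{\tilde T+\tilde A\mathcal N}{\tilde B+\tilde A}$ with $\tilde A=\frac{2p^2(n-2p+1)}{n(n+2)}$. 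At $p=1$ this threshold equals $\frac{3n}{2}\frac{n+2}{n+4}=N$, and I would check that over $1\le p\le n/2$ it is minimized at the endpoints $p=1$ and $p=n/2$, so that $N'<N$ already forces $\kappa>0$ for every admissible degree $p$. Then $\scal\le0$, and together with $\scal\ge0$ this gives $\scal=0$; Remark~\ref{NonnegAndScalFlatIsFlat} then yields $\mathcal R=0$, i.e.\ $(M,g)$ is flat. The $N'$-nonpositive case is identical after replacing each $\lambda_\alpha$ by $-\lambda_\alpha$, which leaves both the identity and the estimate invariant.

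The step I expect to be the main obstacle is the sharp eigenvalue estimate together with the optimization over the form degree: one must verify that the lower bound for $\sum_\alpha\lambda_\alpha b_\alpha$ holds across all configurations allowed by the box and sum constraints, and that $p=1$ (and $p=n/2$) are the extremal degrees, so that the single constant $N=\frac{3n}{2}\frac{n+2}{n+4}$ governs all $p$ simultaneously.
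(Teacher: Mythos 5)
Your overall strategy is the same as the paper's (Bochner identity for the parallel form, then a box-constrained optimization over the eigenvalues of $\mathcal R$), except that you unpack the black boxes \cite[Proposition 3.16, Theorem 3.6 (d)]{NienhausPetersenWinkBettiNumbersCOSK} and run the optimization by hand. Your endgame is actually cleaner than the paper's: you conclude $\scal=0$ directly and invoke Remark \ref{NonnegAndScalFlatIsFlat}, whereas the paper's dichotomy sends the $1$-nonnegative case to \cite[Theorem A]{NienhausPetersenWinkBettiNumbersCOSK}, which is a compact statement. However, there are two concrete problems in your write-up.

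First, your Einstein specialization is wrong. Starting from the formula quoted in the introduction you get the coefficient $\frac{p^{2}(n-2p+1)}{n^{2}}$ in front of $\scal$, but the correct identity is $0=\sum_\alpha\lambda_\alpha|S_\alpha\omega|^{2}+\frac{p(n-p)}{n^{2}}\scal|\omega|^{2}$, as used in the paper's proof; the two agree only for $p=1$ and $p=n/2$. A sanity check on the round sphere (where $\mathcal R=\id$, $\sum_\alpha|S_\alpha\omega|^{2}=\frac{p(n+2)(n-p)}{2n}|\omega|^{2}$ and $g(\Ric_L\omega,\omega)=p(n-p)|\omega|^{2}$) shows that the displayed version of \cite[Proposition 2.1]{NienhausPetersenWinkBettiNumbersCOSK} cannot be taken at face value: the middle term must involve the trace-free Ricci tensor, which is why it disappears in the Einstein case with the coefficient $\frac{p(n-p)}{n^2}$. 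With the correct coefficient, $\tilde T$, $\tilde B$ and your $\tilde A$ are all proportional to $\frac{p(n-p)}{n}$, so the threshold $\frac{\tilde T+\tilde A\mathcal N}{\tilde B+\tilde A}$ equals $N$ for \emph{every} $p$ and the optimization over the degree $p$ (which you flag as a main obstacle) evaporates. Your version happens to produce thresholds $\ge N$ for intermediate $p$, so the final inequality would survive, but it is derived from an identity that is false for $2\le p<n/2$.

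Second, the core estimate $\sum_\alpha\lambda_\alpha b_\alpha\ge\frac{\tilde T-N'\tilde B}{\mathcal N-N'}\sum_\alpha\lambda_\alpha$ is false as stated when $N'<\tilde T/\tilde B=\frac{n+2}{2}$: take $\mathcal N=9$, $N'=2$, $\tilde B=\frac34$, $\tilde T=\frac94$, eigenvalues $(-1,1,1,100,\dots,100)$ and $b=(\tilde B,\tilde B,\tilde B,0,\dots,0)$; the left side is $\frac34$ while the right side is about $64$. The inequality does hold for $N'\ge\frac{n+2}{2}$ (for the minimizing configuration one checks $(\mathcal N-k)S_1+(N'-k)(S_2+S_3)\ge(\mathcal N-k)(S_1+S_2)\ge0$ using $S_3\ge\frac{\mathcal N-N'}{N'-k}S_2$), and since $k$-nonnegativity is monotone in $k$ and $\frac{n+2}{2}<N$ you may always replace $N'$ by $\max(N',\frac{n+2}{2})$. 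So this gap is repairable, but the statement you rely on needs the restriction $N'\ge\frac{n+2}{2}$ and a proof; it is not the literal content of the estimate you attribute to \cite{NienhausPetersenWinkBettiNumbersCOSK}.
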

\begin{proof}
As explained in the introduction, by the Bochner technique, the existence of a parallel $p$-form $\omega$ for some $1 \leq p\leq \frac{n}{2}$ implies that we have
\begin{align*}
    g( \Ric_L(\omega), \omega) = 0.
\end{align*}

If $g$ is Einstein, by \cite[Proposition 2.1]{NienhausPetersenWinkBettiNumbersCOSK}, we have the explicit formula
\begin{align*}
0 = \frac{3}{2} g( \Ric_L(\omega), \omega) = \sum_{\alpha} \lambda_{\alpha}|S_{\alpha}\omega|^{2} + \frac{p(n-p)}{n^2} \scal | \omega |^2,
\end{align*}
where $\lbrace S_{\alpha}\rbrace$ is an orthonormal eigenbasis of the curvature operator of the second kind with corresponding eigenvalues $\lbrace\lambda_{\alpha}\rbrace$. Furthermore, \cite[Proposition 3.16]{NienhausPetersenWinkBettiNumbersCOSK} says that 
\begin{align*}
\sum_{\alpha} \lambda_{\alpha}|S_{\alpha}\omega|^{2} + \frac{p(n-p)}{n^2} \scal | \omega |^2 \geq \frac{p(n-p)}{n} \left[ \mathcal{R}, \frac{n+4}{n+2}, \frac{3n}{2} \right] \cdot | \omega |^2,
\end{align*}
where $\left[ \mathcal{R}, \frac{n+4}{n+2}, \frac{3n}{2} \right]$ denotes a finite weighted sum $\sum_{\alpha} \omega_{\alpha} \lambda_{\alpha}$ in terms of the eigenvalues $\lambda_{\alpha}$ with highest weight $\max_{\alpha} \omega_{\alpha} = \frac{n+4}{n+2}$ and total weight $\sum_{\alpha} \omega_{\alpha} = \frac{3n}{2},$ see the introductory discussion of \cite[Section 3]{NienhausPetersenWinkBettiNumbersCOSK} for further background.

Suppose that $\mathcal{R}$ is $N'$-nonnegative for some $N'<N.$ The weight principle \cite[Theorem 3.6 (d)]{NienhausPetersenWinkBettiNumbersCOSK} implies that $\mathcal{R}$ is either $N$-positive or $1$-nonnegative. If $\mathcal{R}$ is $N$-positive, then 
\begin{align*}
0 \geq \frac{p(n-p)}{n} \left[ \mathcal{R}, \frac{n+4}{n+2}, \frac{3n}{2} \right] \cdot | \omega |^2
\end{align*}
implies that $\omega$ vanishes. Otherwise $(M,g)$ is flat or a rational homology sphere by \cite[Theorem A]{NienhausPetersenWinkBettiNumbersCOSK}. 

When $\mathcal{R}$ is $N'$-nonpositive, then the above argument applied to $-\mathcal{R}$ yields the claim. 
\end{proof}

Let
\begin{align*}
C_{p}=C_{p}(n)=\frac{3}{2}\frac{n(n+2)p(n-p)}{n^{2}p-np^{2}-2np+2n^{2}+2n-4p}.
\end{align*}

\begin{lemma} 
\label{NoParallelForms}
Let $(M,g)$ be an $n$-dimensional Riemannian manifold and let $\omega$ be a non-vanishing parallel $p$-form with $1 \leq p\leq n/2$. 

If the curvature operator of the second kind is $C'$-nonnegative or $C'$-nonpositive for some $C'<C_p,$ then $(M,g)$ is flat.
\end{lemma}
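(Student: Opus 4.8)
The plan is to run the argument of Lemma~\ref{NoParallelFormsEinstein} almost verbatim, the only change being that, since $(M,g)$ is no longer assumed Einstein, the Ricci contribution to the curvature term can no longer be collapsed into a multiple of $\scal\,|\omega|^{2}$. Because $\omega$ is parallel it is harmonic with $\nabla\omega=0$, so the Bochner formula of \cite[Proposition 2.1]{NienhausPetersenWinkBettiNumbersCOSK} quoted in the introduction reduces to
\begin{align*}
0=\frac{3}{2}g(\Ric_L(\omega),\omega)=\sum_{\alpha}\lambda_{\alpha}|S_{\alpha}\omega|^{2}+\frac{p(n-2p)}{n}\sum_{j}g\left(i_{\Ric(e_j)}\omega,i_{e_j}\omega\right)+\frac{p^{2}}{n^{2}}\scal\,|\omega|^{2},
\end{align*}
where $\{S_{\alpha}\}$ is an orthonormal eigenbasis of $\mathcal{R}$ with eigenvalues $\{\lambda_{\alpha}\}$.

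First I would invoke the non-Einstein counterpart of the estimate \cite[Proposition 3.16]{NienhausPetersenWinkBettiNumbersCOSK} used in the Einstein case, namely the bound underlying \cite[Theorem B]{NienhausPetersenWinkBettiNumbersCOSK}, which dominates the right hand side from below by a fixed positive multiple of a weighted eigenvalue sum $[\mathcal{R},a,b]\cdot|\omega|^{2}$ whose maximal weight $a$ and total weight $b$ satisfy $b/a=C_{p}$, so that $[\mathcal{R},a,b]\geq 0$ precisely when $\mathcal{R}$ is $C_{p}$-nonnegative. Extracting this threshold is exactly where the rational expression $C_{p}$ is produced, and it is the one step where the Ricci term $\frac{p(n-2p)}{n}\sum_{j}g(i_{\Ric(e_j)}\omega,i_{e_j}\omega)$ -- which no longer reduces to a scalar multiple of $|\omega|^{2}$ -- must be tracked and bounded against the eigenvalues of $\mathcal{R}$; this optimization of weights is the main technical point, and I would import it from \cite{NienhausPetersenWinkBettiNumbersCOSK} rather than redo it. With the estimate in hand, suppose $\mathcal{R}$ is $C'$-nonnegative for some $C'<C_{p}$. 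The weight principle \cite[Theorem 3.6 (d)]{NienhausPetersenWinkBettiNumbersCOSK} then forces $\mathcal{R}$ to be either $C_{p}$-positive or $1$-nonnegative. In the first case $[\mathcal{R},a,b]>0$, so the displayed identity makes the right hand side strictly positive and forces $|\omega|^{2}=0$, contradicting that $\omega$ is non-vanishing; hence this case does not occur.

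It remains to treat the $1$-nonnegative case, and here I would argue locally rather than appeal to the compact classification \cite[Theorem A]{NienhausPetersenWinkBettiNumbersCOSK}. If $\mathcal{R}$ is $1$-nonnegative then every $\lambda_{\alpha}\geq 0$, so $\sum_{\alpha}\lambda_{\alpha}|S_{\alpha}\omega|^{2}\geq 0$, and $\scal=\frac{2n}{n+2}\tr(\mathcal{R})\geq 0$. Moreover $1$-nonnegativity forces $\Ric\geq 0$: applying the same identity with $p=1$ to a unit vector $v$ gives $\frac{n+4}{2n}\Ric(v,v)=\sum_{\alpha}\lambda_{\alpha}|S_{\alpha}v|^{2}+\frac{1}{n^{2}}\scal\,|v|^{2}\geq 0$. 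Since $p\leq n/2$ we have $n-2p\geq 0$, so diagonalizing $\Ric$ shows the middle summand $\frac{p(n-2p)}{n}\sum_{j}g(i_{\Ric(e_j)}\omega,i_{e_j}\omega)\geq 0$ as well. Thus all three summands of the vanishing identity are nonnegative and add to zero, hence each vanishes; the last one forces $\scal=0$, and then $\mathcal{R}=0$ by Remark~\ref{NonnegAndScalFlatIsFlat}, i.e.\ $(M,g)$ is flat. Finally, if $\mathcal{R}$ is $C'$-nonpositive for some $C'<C_{p}$, the identity $g(\Ric_L(\omega),\omega)=0$ is unchanged and the entire argument applies verbatim to $-\mathcal{R}$, yielding the same conclusion. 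The only genuinely hard ingredient is therefore the sharp weighted estimate producing the constant $C_{p}$; everything following it is the sign-symmetric dichotomy described above.
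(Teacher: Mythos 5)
Your proposal is correct and follows the paper's route: the paper's own proof is exactly ``apply the Bochner identity from \cite[Proposition 2.1]{NienhausPetersenWinkBettiNumbersCOSK}, replace the Einstein estimate \cite[Proposition 3.16]{NienhausPetersenWinkBettiNumbersCOSK} by \cite[Proposition 3.14]{NienhausPetersenWinkBettiNumbersCOSK}, and run the weight-principle dichotomy as in Lemma \ref{NoParallelFormsEinstein}.'' The one place you genuinely diverge is the $1$-nonnegative branch: the paper (via Lemma \ref{NoParallelFormsEinstein}) disposes of it by citing \cite[Theorem A]{NienhausPetersenWinkBettiNumbersCOSK}, a statement about \emph{compact} manifolds, which sits awkwardly in a purely local lemma; you instead derive $\Ric\geq 0$ from the $p=1$ instance of the same identity, observe that all three summands are then nonnegative and must vanish, and conclude $\scal=0$ and hence flatness via Remark \ref{NonnegAndScalFlatIsFlat}. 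That local argument is cleaner and better adapted to the incomplete setting the lemma is stated in. Two small imprecisions, neither fatal: the phrase ``$[\mathcal{R},a,b]\geq 0$ precisely when $\mathcal{R}$ is $C_p$-nonnegative'' should be an implication, not an equivalence; and the heavy lifting (the weighted estimate with weight ratio $C_p$) is imported as a black box --- but that is exactly what the paper does as well.
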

\begin{proof}
Any parallel $p$-form $\omega$ satisfies
\begin{align*}
0 = \frac{3}{2} g( \Ric_L(\omega), \omega) = \sum_{\alpha}\lambda_{\alpha}|S_{\alpha}\omega|^{2}+\frac{p(n-2p)}{n}\sum_{j}g\left(i_{\Ric\left(e_{j}\right)}\omega,i_{e_{j}}\omega\right)+\frac{p^{2}}{n^{2}}\scal|\omega|^{2},
\end{align*}
according to \cite[Proposition 2.1]{NienhausPetersenWinkBettiNumbersCOSK}. One can now proceed as in the proof of lemma \ref{NoParallelFormsEinstein}. Instead of \cite[Proposition 3.16]{NienhausPetersenWinkBettiNumbersCOSK} one uses \cite[Proposition 3.14]{NienhausPetersenWinkBettiNumbersCOSK} to estimate the curvature term.
\end{proof}

\section{Irreducibility}
\label{SectionIrreducibility}

In this section we offer a different proof of X.Li's result \cite[Theorem 1.8]{LiCurvatureOperatorSecondKind} about irreducibility of manifolds with $n$-nonnegative curvature operators of the second kind. Our technique also generalizes to manifolds with $n$-nonpositive curvature operators of the second kind and allows for an improvement in the case of Einstein manifolds. In addition, our method is entirely local and completeness of the metric is not required. It is based on a Bochner formula and the following basic observation:

\begin{lemma}
\label{KernelCOSKofProducts}
The kernel of the curvature operator of the second kind of a Riemannian product $\left(M_{1}^{p}\times M_{2}^{n-p},g_{1}\oplus g_{2}\right)$ is at least $p(n-p)$-dimensional.
\end{lemma}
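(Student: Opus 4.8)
The plan is to exhibit an explicit $p(n-p)$-dimensional subspace of $S_0^2(TM)$ that lies in the kernel of $\mathcal{R}$ at every point of the product. The natural candidates are the \emph{mixed} symmetric tensors built from one tangent direction in each factor, since the curvature tensor of a Riemannian product vanishes whenever its arguments are not all tangent to the same factor.

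Concretely, I would fix a point $(x_1,x_2)\in M_1\times M_2$ and choose an orthonormal frame adapted to the splitting $T_{(x_1,x_2)}M = T_{x_1}M_1\oplus T_{x_2}M_2$, say $e_1,\dots,e_p$ tangent to $M_1$ and $e_{p+1},\dots,e_n$ tangent to $M_2$. For $1\le a\le p<\mu\le n$ set $h_{a\mu}=e_a\odot e_\mu$. These $p(n-p)$ tensors are linearly independent and, since $e_a\perp e_\mu$, trace-free, so they span a $p(n-p)$-dimensional subspace of $S_0^2(TM)$. Recalling the product structure of the curvature tensor, namely $R_{ijkl}=0$ unless all four indices belong to the same factor, one computes
\begin{align*}
\overline{R}(h_{a\mu})_{ij}=\tfrac{1}{2}\left(R_{ia\mu j}+R_{i\mu a j}\right)=0,
\end{align*}
because every term on the right has the factor-$1$ index $a$ and the factor-$2$ index $\mu$ among its slots, and such a component vanishes for a product metric.

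Since $\overline{R}(h_{a\mu})=0$ as a symmetric $(0,2)$-tensor, its projection to $S_0^2(TM)$ is also zero, so $\mathcal{R}(h_{a\mu})=\pr_{S_0^2(TM)}\overline{R}(h_{a\mu})=0$. Hence the kernel of $\mathcal{R}$ contains the span of the $h_{a\mu}$ and is therefore at least $p(n-p)$-dimensional. The computation is essentially a bookkeeping exercise; the only point requiring a little care is to confirm that passing from $\overline{R}$ to $\mathcal{R}$ does not reintroduce a nonzero component, which is immediate here because $\overline{R}(h_{a\mu})$ vanishes identically rather than merely having vanishing trace-free part. Equivalently, one may note that the correction term $g(\Ric,h_{a\mu})\tfrac{g}{n}$ already vanishes, since $\Ric_{a\mu}=0$ for a product metric whenever $a$ and $\mu$ lie in different factors.
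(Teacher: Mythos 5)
Your proof is correct and follows essentially the same route as the paper: choose an orthonormal frame adapted to the splitting, observe that the mixed symmetrized tensors $e^{a}\odot e^{\mu}$ (with $a$ tangent to $M_{1}$ and $\mu$ tangent to $M_{2}$) are trace-free, and use the vanishing of mixed curvature components of a product metric to conclude $\overline{R}(e^{a}\odot e^{\mu})=0$ and hence $\mathcal{R}(e^{a}\odot e^{\mu})=0$. The only cosmetic difference is that you also spell out why the projection to $S_{0}^{2}(TM)$ introduces no correction, which the paper leaves implicit.
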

\begin{proof}
Select an orthonormal basis $e_{1},...,e_{p},e_{p+1},...,e_{n}$ where the first $p$ vectors are tangent to $M_{1}$. Since the tangent distributions to each factor are parallel we note that $R_{ijkl}=0$ provided $j\leq p$ and $k\geq p+1$. Since $\overline{R}( e^i \otimes e^j + e^j \otimes e^i ) = R_{\cdot ij \cdot} + R_{\cdot ji \cdot}$ it follows that
\begin{align*}
\mathcal{R}(  e^i \otimes e^j + e^j \otimes e^i ) = \overline{R}( e^i \otimes e^j + e^j \otimes e^i ) = 0
\end{align*}
for all $1 \leq i \leq p$ and $p+1 \leq j \leq n.$  
\end{proof}

\begin{proposition} 
\label{IrreducibleOrFlat}
Let $(M,g)$ be an $n$-dimensional, not necessarily complete, Riemannian manifold. If the curvature operator of the second kind $\mathcal{R}$ is $n$-nonnegative or $n$-nonpositive, then $\left(M,g\right)$ is irreducible or flat.
\end{proposition}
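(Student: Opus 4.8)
The plan is to assume that $(M,g)$ is reducible and deduce that it must be flat (surfaces are trivial, since a reducible $2$-manifold splits into a product of curves and is flat, so I assume $n\ge 3$). By the local de Rham splitting theorem, reducibility of the restricted holonomy means that every point has a neighborhood isometric to a Riemannian product $M_1^p\times M_2^{n-p}$ with $1\le p\le n-1$; after possibly interchanging the factors I may assume $1\le p\le n/2$. Two facts drive the argument. First, Lemma \ref{KernelCOSKofProducts} gives that $\mathcal{R}$ has a kernel of dimension at least $p(n-p)\ge n-1$. Second, the top-degree form $\omega$ of the first factor is a nonzero parallel $p$-form, hence harmonic with $\nabla\omega=0$, so the Weitzenb\"ock term vanishes, $g(\Ric_L(\omega),\omega)=0$.

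A purely linear-algebraic observation reduces everything to the sign of the scalar curvature. Writing the eigenvalues of $\mathcal{R}$ as $\lambda_1\le\dots\le\lambda_N$, $n$-nonnegativity says $\lambda_1+\dots+\lambda_n\ge0$, which forces $\lambda_n\ge0$ and hence $\lambda_{n+1},\dots,\lambda_N\ge0$. If moreover $\tr\mathcal{R}=\tfrac{n+2}{2n}\scal\le0$, then $\sum_{i>n}\lambda_i=\tr\mathcal{R}-\sum_{i\le n}\lambda_i\le0$ is a sum of nonnegative terms, so $\lambda_{n+1}=\dots=\lambda_N=0$; the remaining $\lambda_i$ are then $\le0$ with nonnegative sum, hence all zero. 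Thus $\mathcal{R}=0$ and $(M,g)$ is flat. It therefore suffices to establish $\scal\le0$ in the $n$-nonnegative case, and symmetrically $\scal\ge0$ in the $n$-nonpositive case (apply the argument to $-\mathcal{R}$, which has the same kernel).

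To control $\scal$ I would use the distinguished trace-free tensor $z=(n-p)g_1-pg_2$ built from the two factor metrics. Since $z$ is orthogonal to the kernel block $TM_1\otimes TM_2$ and $\dim\ker\mathcal{R}\ge n-1$, the subspace $\R z\oplus V'$ with $V'\subseteq\ker\mathcal{R}$ of dimension $n-1$ is $n$-dimensional, and $n$-nonnegativity, read as ``every $n$-dimensional subspace has nonnegative trace,'' gives $\langle\mathcal{R}(z),z\rangle\ge0$. A direct computation on the product yields $\langle\mathcal{R}(z),z\rangle=-(n-p)^2\scal_1-p^2\scal_2$ with $\scal_i=\scal_{M_i}$. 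If one factor is one-dimensional, say $p=1$, then $\scal_1=0$ and this reads $\scal=\scal_2\le0$, finishing the case by the previous paragraph. In general I would feed $\omega$ into $g(\Ric_L(\omega),\omega)=0$: the endomorphism action $S\mapsto S\omega$ annihilates both factor-trace-free blocks $S_0^2(TM_i)$ and acts by $0$ on the kernel, so $\sum_\alpha\lambda_\alpha|S_\alpha\omega|^2$ reduces to a multiple of $\langle\mathcal{R}(z),z\rangle$ and the Bochner formula collapses to a linear relation between $\scal_1$ and $\scal_2$; combined with the inequality above this should pin down $\scal\le0$.

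The hardest point is exactly this last passage: the tensor $z$ only probes the weighted combination $(n-p)^2\scal_1+p^2\scal_2$, and upgrading this to the sign of $\scal=\scal_1+\scal_2$ in the unbalanced, higher-dimensional case is the crux. This is precisely where the threshold is sharp, since $S^{n-1}\times S^1$ attains equality at $k=n$; in particular the estimate must be tight and one cannot merely invoke Lemma \ref{NoParallelForms}, whose constant $C_1$ is strictly smaller than $n$. I expect the remaining inequality to come either from the Bochner relation for the factor volume form or from an auxiliary family of $n$-dimensional test subspaces inside $S_0^2(TM_i)\oplus\ker\mathcal{R}$ that detects the individual factor scalar curvatures; once $\scal\le0$ is known, the linear-algebra step gives $\mathcal{R}=0$ and hence flatness.
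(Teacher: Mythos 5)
Your argument is complete only when one factor is one-dimensional (and in the trivial case $n=2$); for $2\le p\le n/2$ the step you yourself flag as the crux is genuinely missing. The test tensor $z=(n-p)g_1-pg_2$ only controls the weighted combination $(n-p)^2\scal_1+p^2\scal_2$, and the Bochner identity for the factor volume form supplies a single further linear relation between $\scal_1$ and $\scal_2$; nothing you have written shows that these two constraints force $\scal_1+\scal_2\le 0$, and you do not compute the coefficients needed to check this. Moreover the target itself is awkward: once one knows $\mathcal{R}\ge 0$ (which, as explained below, the hypotheses do force), one has $\tr\mathcal{R}=\frac{n+2}{2n}\scal\ge 0$, so establishing $\scal\le 0$ is exactly as hard as establishing $\scal=0$, i.e.\ flatness itself. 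Two smaller inaccuracies: the map $S\mapsto S\omega$ does \emph{not} vanish on the off-diagonal kernel block (the elements $e^i\otimes e^j+e^j\otimes e^i$ with $i\le p<j$ move $\omega$ nontrivially; their contribution to $\sum_\alpha\lambda_\alpha|S_\alpha\omega|^2$ drops out only because their eigenvalues vanish), and $z$ is in general not an eigenvector of $\mathcal{R}$, so the reduction of $\sum_\alpha\lambda_\alpha|S_\alpha\omega|^2$ to a multiple of $\langle\mathcal{R}(z),z\rangle$ needs the basis-independence of $\tr(\mathcal{R}\circ P)$ for the quadratic form $P(S)=|S\omega|^2$, which you do not address.

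The missing idea is a much more direct use of Lemma \ref{KernelCOSKofProducts}, and it is how the paper argues. Since $\dim\ker\mathcal{R}\ge p(n-p)\ge n-1$, a negative eigenvalue would put at least one strictly negative and otherwise only nonpositive numbers among the $n$ smallest eigenvalues (the $\ge n-1$ zeros fill the remaining slots), so $\lambda_1+\dots+\lambda_n<0$, contradicting $n$-nonnegativity. Hence $\mathcal{R}\ge 0$ (resp.\ $\mathcal{R}\le 0$ in the nonpositive case), which is much stronger than your conclusion $\lambda_n\ge 0$. Nonnegativity of $\mathcal{R}$ implies nonnegativity of the sectional, hence of the Ricci and scalar, curvature, so in the identity
\begin{align*}
0=\sum_{\alpha}\lambda_{\alpha}|S_{\alpha}\omega|^{2}+\frac{p(n-2p)}{n}\sum_{j}g\bigl(i_{\Ric(e_{j})}\omega,i_{e_{j}}\omega\bigr)+\frac{p^{2}}{n^{2}}\scal\,|\omega|^{2}
\end{align*}
every term is nonnegative (using $n-2p\ge 0$) and therefore vanishes; in particular $\scal=0$, and your own trace argument (equivalently Remark \ref{NonnegAndScalFlatIsFlat}) gives $\mathcal{R}=0$ and flatness of the neighborhood, hence of $(M,g)$, since the restricted holonomy is independent of the base point. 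You are right that Lemma \ref{NoParallelForms} cannot be invoked here because $C_1<n$, but the sharpness of the threshold at $k=n$ is accounted for precisely by the kernel argument above, not by a refined scalar curvature estimate.
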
 
\begin{proof}
Suppose that $(M,g)$ splits locally as a Riemannian product $\left(M_{1}^{p}\times M_{2}^{n-p},g_{1}\oplus g_{2}\right)$ with $1 \leq p\leq n-p$.

Due to lemma \ref{KernelCOSKofProducts}, $\mathcal{R}$ has a kernel of dimension $\dim \operatorname{ker} \mathcal{R} \geq p\left(n-p\right)\geq n-1$. In particular, the assumption that $\mathcal{R}$ is $n$-nonnegative or $n$-nonpositive forces $\mathcal{R}$ to be nonnegative or nonpositive.

Let $\omega$ denote the pullback of the volume form of $\left(M_{1},g_{1}\right)$ to an open set $U \subset M.$ Note that $\omega$ is a nonvanishing parallel form on $U.$ Thus it satisfies the Bochner formula
\begin{align*}
0=\frac{3}{2} g(\Ric_L(\omega), \omega) = \sum_{\alpha}\lambda_{\alpha}|S_{\alpha}\omega|^{2}+\frac{p(n-2p)}{n}\sum_{j}g\left(i_{\Ric\left(e_{j}\right)}\omega,i_{e_{j}}\omega\right)+\frac{p^{2}}{n^{2}}\scal|\omega|^{2},
\end{align*}
cf. \cite[Proposition 2.1]{NienhausPetersenWinkBettiNumbersCOSK}. Since the curvature operator of the second kind is nonnegative or nonpositive, all curvature terms have the same sign, so the sum can only vanish if $\scal = 0$. Thus, $\tr \left( \mathcal{R} \right) = \frac{n+2}{2n} \scal = 0$ and $\mathcal{R}$ also vanishes identically, i.e., $(U,g_{|U})$ is flat. Since the restricted holonomy does not depend on $q \in M,$ $(M,g)$ is either irreducible or flat. 
\end{proof}

\begin{remark}
\normalfont
In \cite[Proposition 5.1]{LiCurvatureOperatorSecondKind}, X.Li proved that if $(M_1^p \times M_2^{n-p}, g_1 \oplus g_2)$ has $(p(n-p)+1)$-nonnegative curvature operator of the second kind, then it is flat. 

This also applies to product manifolds with $(p(n-p)+1)$-nonpositive curvature operators of the second kind. Indeed, according to Lemma \ref{KernelCOSKofProducts}, a product manifold with $(p(n-p)+1)$-nonnegative (or $(p(n-p)+1)$-nonpositive) curvature operator of the second kind in fact has nonnegative (or nonpositive) curvature operator of the second kind. The argument in the proof of proposition \ref{IrreducibleOrFlat} again implies that the manifold is flat. 

The example of $S^p \times S^{n-p}$ below shows that this result is optimal. 
\end{remark}

\begin{example}
\normalfont
\label{CurvatureSnS1}
The curvature operator of the second kind of $S^p \times S^{n-p}$. The case $p=1$ is discussed in \cite[Example 2.6]{LiCurvatureOperatorSecondKind}. Let $e_1, \ldots, e_p$ denote orthonormal tangent vectors corresponding to $S^p$ and let $e_{p+1}, \ldots, e_n$ denote orthonormal tangent vectors corresponding to $S^{n-p}.$ Let 
\begin{align*}
E^{ij} = 
\begin{cases}
\frac{1}{\sqrt{2}} (e^i \otimes e^j + e^j \otimes e^i) & \ \text{ if } \ i \neq j, \\
e^i \otimes e^i & \ \text{ if } \ i = j.
\end{cases}
\end{align*}
The eigenspaces for the curvature operator of the second kind $\mathcal{R}= \mathcal{R}_{S^p \times S^{n-p}}$ are given by 
\begin{align*}
\ker \mathcal{R} = & \ \operatorname{span} \left\lbrace E^{ij} \ \vert \ 1 \leq i \leq p < j \leq n \right\rbrace, \\
\operatorname{Eig}_1(\mathcal{R}) = & \ \operatorname{span} \left\lbrace E^{ij} \ \vert \ 1 \leq i < j \leq p \ \text{ or } \ p+1 \leq i < j \leq n \right\rbrace \\
& \ \oplus \operatorname{span} \left\lbrace E^{11} - E^{ii} \ \vert \ 1 < i \leq p  \right\rbrace \\
& \ \oplus \operatorname{span} \left\lbrace E^{ii} - E^{nn} \ \vert \ p+1 \leq i < n  \right\rbrace,  \\
\operatorname{Eig}_{1- 2 \frac{p(n-p)}{n}}(\mathcal{R}) = & \ \operatorname{span} \left\lbrace \frac{1}{p} \left( E^{11} + \ldots + E^{pp} \right) - \frac{1}{n-p} \left( E^{p+1,p+1} + \ldots + E^{nn} \right) \right\rbrace.
\end{align*}
Note that for $n \geq 3$ the eigenvalue $1- 2 \frac{p(n-p)}{n}$ is negative.
\end{example}

In the Einstein case we have the following improvement of proposition \ref{IrreducibleOrFlat}:

\begin{proposition}
\label{IrreducibilityEinsteinCase}
Let $(M,g)$ be an $n$-dimensional Einstein manifold, and set $N=\frac{3n}{2}\frac{n+2}{n+4}$. If the curvature operator of the second kind is $N'$-nonnegative or $N'$-nonpositive for some $N'<N,$ then $\left(M,g\right)$ is irreducible or flat.
\end{proposition}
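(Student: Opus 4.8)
The plan is to mirror the proof of Proposition \ref{IrreducibleOrFlat}, but to replace the crude sign argument there (which only exploited nonnegativity or nonpositivity of $\mathcal{R}$ together with the large kernel from Lemma \ref{KernelCOSKofProducts}) by the sharper nonexistence statement for parallel forms in the Einstein setting, namely Lemma \ref{NoParallelFormsEinstein}. The point is that in the Einstein case we no longer need to force $\mathcal{R}$ to be genuinely definite; the weighted estimate behind Lemma \ref{NoParallelFormsEinstein} already rules out a local parallel form below the threshold $N=\frac{3n}{2}\frac{n+2}{n+4}$.

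First I would assume that $(M,g)$ is not irreducible, so that its restricted holonomy representation is reducible. By the local de Rham splitting theorem, $(M,g)$ then splits near some point as a Riemannian product $(M_1^p \times M_2^{n-p}, g_1 \oplus g_2)$ with $1 \leq p \leq n-p$; after possibly interchanging the two factors we may arrange $1 \leq p \leq n/2$. Next I would take the pullback $\omega$ of the volume form of $(M_1,g_1)$ to a suitable open set $U \subset M$. Because the tangent distributions to the two factors are parallel, $\omega$ is a nonvanishing parallel $p$-form on $U$.

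Now I would verify that all hypotheses of Lemma \ref{NoParallelFormsEinstein} hold on $U$. Being Einstein is a pointwise, hence local, condition, so $(U,g|_U)$ is again Einstein; and $N'$-nonnegativity (respectively $N'$-nonpositivity) of $\mathcal{R}$ is inherited by every open subset. Thus $(U,g|_U)$ is an Einstein manifold carrying a nonvanishing parallel $p$-form with $1 \leq p \leq n/2$, whose curvature operator of the second kind is $N'$-nonnegative or $N'$-nonpositive for some $N'<N$. Lemma \ref{NoParallelFormsEinstein} then yields that $(U,g|_U)$ is flat. Finally, since the restricted holonomy does not depend on the base point, flatness on $U$ propagates to all of $M$, exactly as in Proposition \ref{IrreducibleOrFlat}, so $(M,g)$ is flat. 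Hence $(M,g)$ is irreducible or flat.

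The analytic heart of the argument is entirely packaged into Lemma \ref{NoParallelFormsEinstein}, so the remaining content is essentially bookkeeping: correctly invoking the local de Rham decomposition, arranging $p \leq n/2$, and checking that the Einstein and curvature hypotheses restrict to the open set $U$. I expect the only genuinely delicate step to be the passage from local flatness on $U$ to global flatness of $(M,g)$, which relies on the holonomy representation being independent of the base point precisely as in the proof of Proposition \ref{IrreducibleOrFlat}.
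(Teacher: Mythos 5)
Your proposal is correct and follows essentially the same route as the paper: the paper's proof likewise takes the pullback of the volume form of the first factor of the local splitting as a nonvanishing parallel $p$-form on an open set $U$ and applies Lemma \ref{NoParallelFormsEinstein} to conclude flatness. The extra bookkeeping you include (locality of the Einstein and curvature hypotheses, propagation of flatness via base-point independence of the restricted holonomy) is all sound and implicit in the paper's shorter write-up.
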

\begin{proof}
Suppose that $\left(M^{n},g\right)$ splits locally as $\left(M_{1}^{p}\times M_{2}^{n-p},g_{1}\oplus g_{2}\right)$ with $1 \leq p \leq n-p.$ Let $\omega$ denote the pullback of the volume form of $M_1$ to an open set $U \subset M.$ Note that $\omega$ is a nonvanishing parallel $p$-form on $U.$ According to lemma \ref{NoParallelFormsEinstein}, $(U,g_{|U})$, and hence $(M,g)$, is flat.
\end{proof}

\section{Proofs of the main Theorems}
\label{SectionProofs}

In this section we prove the theorems from the introduction. We start with the K\"ahler case. \vspace{2mm}

\textit{Proof of Theorem \ref{MainTheoremKaehler}}. For a K\"ahler metric the K\"ahler form $\omega$ is a parallel 2-form and its powers $\omega^{p}$ are parallel $2p$-forms. When $m$ is even we use $p=m/2$ and when $m$ is odd $p=(m-1)/2$ to obtain a nontrivial parallel form of degree $m$ or $m-1$, respectively. 

Thus we can apply lemma \ref{NoParallelForms} with 
\begin{align*}
C_m =  3m\frac{m+1}{m+2}
\end{align*}
when $m$ is even and
\begin{align*}
C_{m-1} = 3m \frac{(m+1)(m^2-1)}{(m+2)(m^2+1)}
\end{align*}
when $m$ is odd.  $\hfill \Box$ 

\vspace{2mm}

Note that if $(M,g)$ is not locally symmetric with irreducible holonomy representation, then its restricted holonomy is contained in Berger's list of holonomy groups \cite{BergerHolonomyClassfication}. In particular, unless the restricted holonomy is $SO(n),$ $(M,g)$ admits a non-vanishing parallel form, cf. \cite[Section 10.109]{BesseEinstein}.

For the proofs of Theorems \ref{MainTheoremSOorTrivial} and Theorem \ref{MainTheoremEinstein} we first observe that apart from  $SO(n)$ there is only one more holonomy representation that prevents the existence of a local parallel form. 

\begin{lemma}
\label{HolonomyAndLocalForms}
Unless the restricted holonomy representation of an $n$-dimensional, not necessarily complete Riemannian manifold $(M,g)$ is given by
\begin{enumerate}
\item the standard representation of $SO(n)$ on $\R^n$ or
\item the representation of $SO\left(3\right)$ on $\mathbb{R}^{5}$ associated to the pair of symmetric spaces $SU(3)/SO(3)$ and $SL(3,\mathbb{C})/SO(3)$ for $n=5,$
\end{enumerate}
$(M,g)$ admits a local parallel form for some $0<p \leq \frac{n}{2}$. That is, for every point in $M$ there is an open neighborhood $U$ together with $p$-parallel form defined on $U$ for some $0<p \leq \frac{n}{2}$.
\end{lemma}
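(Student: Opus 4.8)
The plan is to invoke Berger's classification of restricted holonomy groups of Riemannian manifolds and to check, case by case, that every entry on Berger's list—other than the two exceptions (a) and (b)—carries a parallel form of the required degree. Since the restricted holonomy is an infinitesimal/local invariant, it suffices to work on a neighborhood $U$ of an arbitrary point, where the holonomy-reduced structure produces genuinely parallel tensors; existence of a parallel $p$-form on $U$ follows from the holonomy principle (a tensor is parallel iff it is fixed by the holonomy representation), cf. \cite[Section 10.109]{BesseEinstein}.

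First I would separate two regimes. If the restricted holonomy representation is \emph{reducible}, then $(M,g)$ splits locally as a Riemannian product $\left( M_1^p \times M_2^{n-p}, g_1 \oplus g_2 \right)$ with $1 \leq p \leq n-p$, and the pullback of the volume form of the factor $M_1$ is a nonvanishing parallel $p$-form with $1 \leq p \leq \frac{n}{2}$; this disposes of the reducible case immediately. If instead the representation is \emph{irreducible} and the manifold is \emph{not} locally symmetric, Berger's theorem \cite{BergerHolonomyClassfication} restricts the holonomy to the standard list: $SO(n)$, $U(m)$ and $SU(m)$ with $n=2m$, $Sp(m)$ and $Sp(m)\cdot Sp(1)$ with $n=4m$, and the exceptional groups $G_2$ (with $n=7$) and $Spin(7)$ (with $n=8$). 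Excluding $SO(n)$ itself, each remaining group fixes a distinguished form: the K\"ahler form $\omega$ of degree $2$ for $U(m), SU(m), Sp(m)$; the $3$-form for $G_2$; the $4$-form for $Spin(7)$; and the quaternionic $4$-form $\Omega$ for $Sp(m)\cdot Sp(1)$. In every instance the degree lies in $(0, \frac{n}{2}]$—for the K\"ahler form one may also pass to a suitable power $\omega^q$ to stay within the range—so a local parallel form exists.

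The remaining and most delicate case is when the representation is irreducible but $(M,g)$ is \emph{locally symmetric}, since then Berger's list does not apply and the holonomy is the isotropy representation of some irreducible symmetric space $G/H$. Here I would argue that the parallel forms on $U$ correspond to the $H$-invariants in $\Lambda^{\bullet}\R^n$, which are in turn computed by the real cohomology of the compact dual symmetric space. By Poincar\'e duality and the structure of symmetric spaces, the only way to avoid a nontrivial invariant form in degree $0 < p \leq \frac{n}{2}$ is for the compact dual to be a rational cohomology sphere; Wolf's classification \cite{WolfSymmetricRealCohomSpheres} of such symmetric spaces then leaves exactly the pair $SU(3)/SO(3)$ and its noncompact dual $SL(3,\mathbb{C})/SO(3)$, whose isotropy representation is the representation of $SO(3)$ on $\R^5$ in exception (b). This is the step I expect to be the main obstacle, as it requires translating the nonexistence of a local parallel form into a statement about the cohomology of the dual symmetric space and then correctly reading off Wolf's list; the subtlety is that $SU(3)/SO(3)$ is a rational homology sphere but not an integral one, which is exactly why it survives as the lone exception in dimension $n=5$.
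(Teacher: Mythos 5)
Your proposal is correct and follows essentially the same route as the paper: split off the reducible case via the volume form of a local factor, handle the irreducible non-symmetric case via Berger's list and the distinguished parallel forms of each reduced holonomy group, and reduce the irreducible locally symmetric case to the real cohomology of the compact dual (Cartan's theorem that cohomology classes are represented by invariant/parallel forms, plus Poincar\'e duality), where Wolf's classification of symmetric rational cohomology spheres isolates $SU(3)/SO(3)$ and its noncompact dual. The only cosmetic difference is that the paper organizes the irreducible case by whether the holonomy acts transitively on the unit sphere (invoking Berger--Simons to conclude local symmetry in the non-transitive case), which is logically equivalent to your split into non-locally-symmetric versus locally symmetric.
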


\begin{proof}
In case the holonomy is reducible we obtain a local product structure. Thus we can assume $\left(M^{n},g\right)=\left(M_{1}^{p}\times M_{2}^{n-p},g_{1}\oplus g_{2}\right)$ with $p \leq n-p.$ In this case the volume form on the first factor pulls back to a parallel $p$-form on $M$. 

When the holonomy is transitive on the unit sphere in $T_{p}M$, the reduction of the holonomy induces a local parallel form. The specific forms are mentioned in \cite[Sections 10.109-10.111]{BesseEinstein}. 

Thus we may assume that the restricted holonomy is irreducible and does not act transitively on the unit sphere. The theorem of Berger \cite{BergerHolonomyClassfication} and Simons \cite{SimonsTransitivityHolonomy}, see also \cite[Theorem 10.90]{BesseEinstein}, implies that $M$ is locally symmetric. In particular, it corresponds to a unique irreducible simply connected symmetric space.

Irreducible simply connected symmetric spaces come in pairs of a compact and a noncompact symmetric space with the same holonomy representation, see \cite[Chapters 7, 10]{BesseEinstein}. For such a pair with the same holonomy representation, the holonomy principle implies that if one has a parallel form so will the other. This allows us to restrict attention to compact symmetric spaces. 

In the compact case, due to a result of \'E. Cartan,  parallel forms generate the real cohomology and in fact every cohomology class contains a parallel form, see \cite{WolfSpacesOfConstantCurvature}. Thus we are finally reduced to the question of which compact simply connected irreducible symmetric spaces have the de Rham cohomology of spheres. This question is answered for all compact symmetric spaces by Wolf in \cite{WolfSymmetricRealCohomSpheres} and shows that in the simply connected case only round spheres and $SU(3)/SO(3)$ are rational homology spheres.
\end{proof}

\begin{corollary}
\label{ParallelFormsSymmetricSpaces}
Let $(M,g)$ be a simply connected symmetric space. Unless $(M,g)$ is the round sphere, hyperbolic space, $SU(3)/SO(3)$ or $SL(3,\mathbb{C})/SO(3)$, $(M,g)$ admits a parallel $p$-form for some $0<p \leq \frac{1}{2} \dim M.$
\end{corollary}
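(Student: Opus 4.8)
The plan is to obtain the corollary directly from Lemma \ref{HolonomyAndLocalForms} by two reductions: first upgrading the local parallel form produced there to a global one using simple connectivity, and then matching the two exceptional holonomy representations (a) and (b) to the explicit list of symmetric spaces.

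First I would use that a simply connected symmetric space $(M,g)$ is complete and has restricted holonomy equal to its full holonomy group. By the holonomy principle a parallel $p$-form is the same datum as a holonomy-invariant element of $\Lambda^p T_o^* M$ at a basepoint $o$, and on a simply connected manifold any such invariant element extends by parallel transport to a globally defined parallel $p$-form. Hence the local parallel $p$-form supplied by Lemma \ref{HolonomyAndLocalForms} yields a genuine parallel $p$-form on all of $M$, with the same degree bound $0 < p \leq \frac{1}{2}\dim M$. Therefore $(M,g)$ admits the desired parallel form unless its holonomy representation is one of the exceptions (a) or (b) of Lemma \ref{HolonomyAndLocalForms}.

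Next I would identify the simply connected symmetric spaces realizing (a) and (b). Since the holonomy of a symmetric space $G/K$ is the image of the isotropy representation of $K$, this is a matter of reading off which symmetric spaces have the prescribed isotropy representation; the compact and noncompact members of each dual pair share the same holonomy representation, so they are excluded or included together. For (b) the representation of $SO(3)$ on $\mathbb{R}^5$ is, by the very construction of the exception in Lemma \ref{HolonomyAndLocalForms}, the isotropy representation of the dual pair $SU(3)/SO(3)$ and $SL(3,\mathbb{C})/SO(3)$. For (a) I would invoke the classification of symmetric spaces to conclude that the standard representation of $SO(n)$ occurs as an isotropy representation only for the non-flat constant curvature spaces, which in the simply connected case are exactly the round sphere $S^n = SO(n+1)/SO(n)$ and hyperbolic space $H^n = SO(n,1)/SO(n)$; note that flat Euclidean space has trivial holonomy, not $SO(n)$, and carries parallel forms of every degree, so it is correctly absent from the exceptional list.

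The main obstacle is this last identification for case (a): one must be sure that no irreducible symmetric space other than $S^n$ and $H^n$ has full $SO(n)$ holonomy in the standard representation. This follows from the standard tables of isotropy representations of irreducible symmetric spaces, where the full $SO(n)$ appears only for constant curvature, but it is the step on which the whole translation rests. Assembling the two cases, every simply connected symmetric space other than $S^n$, $H^n$, $SU(3)/SO(3)$ and $SL(3,\mathbb{C})/SO(3)$ falls outside exceptions (a) and (b), and so admits a parallel $p$-form with $0 < p \leq \frac{1}{2}\dim M$.
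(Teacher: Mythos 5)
Your proposal is correct and follows essentially the intended derivation: the paper states the corollary without a separate proof, as an immediate consequence of Lemma \ref{HolonomyAndLocalForms}, and your two steps --- globalizing the local parallel form via simple connectivity and the holonomy principle, and matching the exceptional representations (a) and (b) to $S^n$, $H^n$, $SU(3)/SO(3)$ and $SL(3,\mathbb{C})/SO(3)$ --- are exactly what that deduction requires. The one fact you outsource to classification tables, namely that the standard $SO(n)$ representation occurs as the holonomy of an irreducible symmetric space only for the constant curvature spaces, can alternatively be read off from the proof of the lemma itself (Cartan's theorem that parallel forms generate the cohomology of a compact symmetric space together with Wolf's classification of symmetric rational cohomology spheres), so no input beyond what the paper already uses is needed.
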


\textit{Proof of Theorem \ref{MainTheoremEinstein}}. By proposition \ref{IrreducibilityEinsteinCase} we can assume that $(M,g)$ is irreducible. 

Suppose in addition that $(M,g)$ is locally symmetric. According to \cite[Example 4.5]{NienhausPetersenWinkBettiNumbersCOSK}, if $(M,g)$ is locally modeled on $SU(3)/SO(3)$, then its curvature operator of the second kind is $9$-positive but not $8$-nonnegative. However, Theorem \ref{MainTheoremEinstein} requires $\frac{35}{6}$-nonnegativity in dimension $n=5,$ so this case does not arise. Similarly, $(M,g)$ cannot be locally modeled on the dual symmetric space $SL\left(3,\mathbb{C}\right)/SO\left(3\right)$. Thus, unless $(M,g)$ has restricted holonomy $SO(n)$, it admits a local parallel $p$-form for $0<p \leq \frac{1}{2} \dim M$ according to corollary \ref{ParallelFormsSymmetricSpaces}. However, the existence of local parallel $p$-forms is excluded by lemma \ref{NoParallelFormsEinstein} and therefore this case does not occur either. Overall, Theorem \ref{MainTheoremEinstein} follows if $(M,g)$ is locally symmetric. 

Recall that according to Berger's classification of holonomy groups, if $(M,g)$ is neither locally symmetric nor has restricted holonomy $SO(n)$, then $(M,g)$ has reduced holonomy. 

Unless $(M,g)$ is K\"ahler or quaternion K\"ahler, $(M,g)$ is Ricci flat. Due to the assumption on the eigenvalues of the curvature operator of the second kind, remark \ref{NonnegAndScalFlatIsFlat} implies that $(M,g)$ is flat. If $(M,g)$ is K\"ahler, then the K\"ahler form is a non-vanishing parallel $2$-form on $M.$ If $(M,g)$ is quaternion K\"ahler, then the Kraines form is a nonvanishing parallel $4$-form on $M.$ However, due to lemma \ref{NoParallelFormsEinstein}, this is impossible unless $(M,g)$ is flat. 

Consequently, $(M,g)$ has restricted holonomy $SO(n)$ or is flat. $\hfill \Box$

\vspace{2mm}

\textit{Proof of Theorem \ref{MainTheoremSOorTrivial}}. Due to proposition \ref{IrreducibleOrFlat} we may assume that $(M,g)$ is irreducible. If $(M,g)$ is in addition locally symmetric, then it is Einstein, and the claim follows from Theorem \ref{MainTheoremEinstein}. We again apply Berger's holonomy classification to see that either the restricted holonomy is $SO(n)$ or reduced. The case of reduced holonomy is treated as in the proof of Theorem \ref{MainTheoremEinstein}, with lemma \ref{NoParallelFormsEinstein} replaced by lemma \ref{NoParallelForms}. $\hfill \Box$


\end{document}